\newcommand{\id}{\operatorname{id}}
\newcommand{\fd}{\operatorname{fd}}
\newcommand{\rfd}{\operatorname{Rfd}}
\newcommand{\gfd}{\operatorname{Gfd}}
\newcommand{\CIfd}{\operatorname{CIfd}}
\newcommand{\pd}{\operatorname{pd}}
\newcommand{\Hom}{\operatorname{Hom}}
\newcommand{\Tor}{\operatorname{Tor}}
\newcommand{\Ext}{\operatorname{Ext}}
\newcommand{\Ass}{\operatorname{Ass}}
\newcommand{\amp}{\operatorname{amp}}
\newcommand{\Spec}{\operatorname{Spec}}
\newcommand{\Supp}{\operatorname{Supp}}
\newcommand{\depth}{\operatorname{depth}}
\newcommand{\E}{\operatorname{E}}
\renewcommand{\H}{\operatorname{H}}
\newcommand{\uhom}{{\mathbf R}\Hom}
\newcommand{\utp}{\otimes^{\mathbf L}}
\renewcommand{\H}{\mbox{H}}
\newcommand{\fm}{\frak{m}}
\newcommand{\fp}{\frak{p}}
\newtheorem{thm}{Theorem}[section]
\newtheorem{cor}[thm]{Corollary}
\newtheorem{lem}[thm]{Lemma}
\newtheorem{prop}[thm]{Proposition}
\newtheorem{defn}[thm]{Definition}
\newtheorem{exam}[thm]{Example}
\newtheorem{rem}[thm]{Remark}
\begin{document}

\bibliographystyle{amsplain}

\date{}

\author{Parviz Sahandi, Tirdad Sharif, and Siamak Yassemi}

\address{Department of Mathematics, University of Tabriz, Tabriz,
Iran, and School of Mathematics, Institute for Research in
Fundamental Sciences (IPM), P.O. Box: 19395-5746, Tehran Iran.}

\email{sahandi@ipm.ir}

\address{School of Mathematics, Institute for Research in
Fundamental Sciences (IPM), P.O. Box: 19395-5746, Tehran Iran.}

\email{sharif@ipm.ir}

\address{Department of Mathematics, University of
Tehran, Tehran, Iran and School of Mathematics, Institute for
Research in Fundamental Sciences (IPM), P.O. Box: 19395-5746, Tehran
Iran.}

\email{yassemi@ipm.ir}

\keywords{Complete intersection flat dimension, large restricted
flat dimension, Auslander-Buchsbaum formula, depth formula,
dependency formula}

\subjclass[2000]{13D05-13D09-13D25-13C15}

\thanks{P. Sahandi was supported in part by a grant from
IPM (No. 89130051).}

\thanks{T. Sharif was supported in part by a grant from IPM (No.
83130311)}
\thanks{S. Yassemi was supported in part by a grant from
IPM (No. 89130213).}

\title{Depth formula via complete intersection flat dimension}

\begin{abstract}

We prove the depth formula, for homologically bounded complexes $X,
Y$ provided that the complete intersection flat dimension of $X$ is
finite and $\sup(X\utp_RY)<\infty$. In particular, let $M$ and $N$
are two $R$-modules and the complete intersection flat dimension of
$M$ is finite. Then $M$ and $N$ satisfies the depth formula,
provided $\Tor^R_i(M,N)=0$ for all $i\ge 1$.


\end{abstract}

\maketitle

\section{Introduction}

In this paper $(R,\fm,k)$ is a local Noetherian ring with unique
maximal ideal $\fm$ and residue field $k$. Two finitely generated
$R$-modules $M$ and $N$ satisfy the \emph{depth formula} if
$$\depth(M\otimes_RN)+\depth R=\depth_RM+\depth_RN.$$

The \emph{Auslander-Buchsbaum formula} asserts that if a finitely
generated $R$-module $M$ has finite projective dimension, then
$\depth_RM+\pd_RM=\depth R$. In \cite{A2} Auslander further
generalized this formula for $M$ as before and $N$ a finitely
generated $R$-module.  In fact he showed that for
$s:=\fd_R(M,N)=\sup\{n|\Tor_n^R(M,N)\neq 0\}$ if either $s=0$ or
$\depth_R\Tor_{s}^R(M,N)\le1$, then
$$(*) \qquad s=\depth R-\depth_RM-\depth_RN+\depth_R\Tor_s^R(M,N).$$
The case $s=0$ is the depth formula. Note that with $N=k$, the
residue field of $R$, the equality $(*)$ is just the
Auslander-Buchsbaum formula. Also note that for any finitely
generated $R$-modules $M$ and $N$, if $\fd_R(M,N)<\infty$, then all
the terms in $(*)$ are defined and finite.

In \cite{HW} Huneke and Wiegand showed that for the complete
intersection ring $R$ and non-zero finitely generated $R$-modules
$M$ and $N$ if $\Tor^R_i(M,N) = 0$ for all $i\geq1$, then $M$ and
$N$ satisfies the depth formula.

Later, Araya and Yoshino \cite{AY} showed that for finitely
generated $R$-modules $M$ and $N$ such that $M$ has finite complete
intersection dimension \cite{AGP} and $s=\fd_R(M, N)<\infty$, if
$s=0$ or $\depth_R \Tor^R_s(M,N)\le 1$, then $(*)$ holds for $M$ and
$N$.

Then, Choi and Iyengar \cite{CI} showed that for finitely generated
$R$-modules $M$ and $N$ such that $s=\fd_R(M,N)<\infty$, if $M$ has
finite complete intersection dimension \cite{AGP}, then $\depth_R
M+\depth_R N-\depth R\ge s$ with equality if and only if
$\depth_R\Tor^R_s(M, N)=0$.

Finally Bergh and Jorgensen \cite{BJ}, prove that the depth formula
holds for modules $M$ and $N$ such that $\fd_R(M,N)=0$ in certain
cases over a Cohen-Macaulay local ring, provided one of the modules
has reducible complexity.

In Section 2 of this paper we recall the definition of complete
intersection flat dimension for complexes. We show that both
Gorenstein flat dimension and large restricted flat dimension are
refinements of the complete intersection flat dimension.

Section 3 which is the main part of this paper is devoted to the
depth formula for complexes. More precisely let $(R,\fm,k)$ be a
local ring and let $X, Y\in{\mathcal D}_{b}(R)$ such that the
complete intersection flat dimension of $X$ is finite and
$\sup(X\utp_RY)<\infty$. Then
$$\depth_R(X\utp_RY)=\depth_RX+\depth_RY-\depth R,$$
which is Theorem \ref{DF}. One of the main tool in our study is
Proposition \ref{split}, which is proved by using the universal
resolutions. From a general point of view, the construction of the
universal resolutions is based on the bar construction, a nice
structure that comes from algebraic topology. In \cite{I97} Iyengar
studied the bar construction and consequently, the universal
resolutions in commutative algebra for arbitrary modules. We refer
the interested reader to \cite[Section 3]{Av} for details. In
Theorem \ref{dep} we show that for $X, Y\in{\mathcal D}_{b}(R)$ such
that the complete intersection flat dimension of $X$ is finite and
$\sup(X\utp_RY)<\infty$, we have
$$
\sup(X\utp_{R}Y)=\sup\{\depth R_\fp-\depth_{R_\fp}
{X_\fp}-\depth_{R_\fp} {Y_\fp}|\fp\in\Supp X\cap\Supp Y\}.
$$
We end the paper with some results about $\inf\uhom_R(X,Y)$.

To facilitate the reading of the introduction and of the paper, we
first review some basic facts on complexes from \cite{F}, \cite{F79}
and \cite{AF}.

Let $X$ be a complex of $R$-modules and $R$-homomorphisms. For an
integer $n$, the $n$-th \emph{shift} or \emph{suspension} of $X$ is
the complex $\Sigma^{n}X$ with $(\Sigma^{n}X)_{\ell}=X_{\ell-n}$ and
$\partial_{\ell}^{\Sigma^{n}X}=(-1)^{n}\partial_{\ell-n}^{X}$ for
each $\ell$. 
The \emph{supremum} and the \emph{infimum} of a complex $X$, denoted
by $\sup(X)$ and $\inf(X)$ are defined by the supremum and infimum
of $\{i\in\mathbb{Z}|\H_i(X)\neq0\}$ and set
$\amp(X)=\sup(X)-\inf(X)$.

The symbol ${\mathcal D}(R)$ denotes the \emph{derived category} of
$R$-complexes. The full subcategories ${\mathcal D}_{-} (R)$
${\mathcal D}_{+} (R)$, ${\mathcal D}_{b} (R)$ and ${\mathcal D}_{0}
(R)$ of ${\mathcal D}(R)$ consist of $R$-complexes $X$ while
$\H_{\ell}(X)=0$, for respectively $\ell\gg 0$, $\ell\ll 0$,
$|\ell|\gg 0$ and $\ell\neq 0$. By ${\mathcal D}^f(R)$ we denote the
full subcategory consisting of complexes $X$ with all homology
modules ${\H}_{\ell}(X)$ are finitely generated over $R$, called
\emph{homologically degreewise finite} complexes. A complex is
\emph{homologically finite} if it is homologically both bounded and
degreewise finite. The right derived functor of the homomorphism
functor of $R$-complexes and the left derived functor of the tensor
product of $R$-complexes are denoted by $\uhom_{R}(-,-)$ and
$-\utp_{R}-$, respectively. A homology isomorphism is a morphism
$\alpha:X\to Y$ such that $\H(\alpha)$ is an isomorphism; homology
isomorphisms are marked by the sign $\simeq$, while $\cong$ is used
for isomorphisms. The equivalence relation generated by the homology
isomorphisms is also denoted by $\simeq$. Let $X$ be an $R$-complex.
The \emph{support} of $X$, denoted by $\Supp(X)$, is defined as
$$
\Supp(X)=\{\fp\in\Spec(R)|\H(X_{\fp})\neq0\}.
$$
If $X\in{\mathcal D}_{-}(R)$, the \emph{depth} of $X$ is defined by
$$\depth_RX=-\sup\uhom_R(k,X).$$
See \cite{FI}, for several equivalent conditions of the depth of a
complex.

\section{Basic notions and results}

In this section we recall the definition of the complete
intersection flat dimension, and give some result that we will used
in the next section. The ideal $J$ of $R$ is called a \emph{complete
intersection ideal}, if $J$ is generated by an $R$-regular elements.
We say that $R$ has a \emph{deformation} if there exists a local
ring $Q$ and a complete intersection ideal $J$ in $Q$ such that
$R=Q/J$. A \emph{quasi-deformation} of $R$ is a diagram of local
homomorphisms $R\rightarrow R'\leftarrow Q$, with $R\rightarrow R'$
a flat extension and $R'\leftarrow Q$ a deformation. If the kernel
of $Q\to R'$ is generated by a $Q$-regular sequence of length $c$,
we will sometimes say that the quasi-deformation $R\rightarrow
R'\leftarrow Q$ has \emph{codimension $c$}.

The following definition is from \cite{SSY} and \cite{SW}.

\begin{defn} For each homologically bounded $R$-complex $X$, the
\emph{complete intersection flat dimension} of $X$ is defined by
$$
\CIfd_RX:=\inf\{\fd_Q(R'\utp_RX)-\fd_QR'| \text{ }R\rightarrow
R'\leftarrow Q \text{ is a }\text{ quasi-deformation}\}.$$
\end{defn}

Now we recall the large restricted flat dimension of complexes
and prove that it refines the complete intersection flat
dimension. The \emph{large restricted flat dimension} of $X$ over
$R$, as introduced in \cite{CFF}, is the quantity
$$
\rfd_RX:=\sup\{\sup(F\utp_{R}X)|F\text{ and }R\text{-module with
}\fd_RF<\infty\}.
$$
This number is  finite, as long as $\H(X)$ is nonzero; see
\cite[Proposition 2.2]{CFF}. It is also given \cite[Theorem
2.4(b)]{CFF} by the following alternate formula also known as
Chouinard's \cite{C} formula:
$$
\rfd_RX:=\sup \{\depth  R_\fp - \depth _{R_{\fp}} X_\fp | \fp\in
\Spec (R)\}.
$$
We show that the large restricted flat dimension is a refinement of
the complete intersection flat dimension.

\begin{prop} \label{CH} Let $X$ be a homologically bounded $R$-complex.
Then we have $\rfd_RX\le\CIfd_RX$, with equality if, $\CIfd_RX$ is
finite. In this case we have
$$\CIfd_RX=\sup \{\depth  R_\fp - \depth _{R_{\fp}} X_\fp | \fp\in
\Spec (R)\}.$$
\end{prop}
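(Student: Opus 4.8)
The plan is to establish the inequality $\rfd_R X \le \CIfd_R X$ first, and then to show that when $\CIfd_R X$ is finite the reverse inequality holds, so that the Chouinard-type formula follows immediately from the alternate formula for $\rfd_R X$ quoted above from \cite[Theorem 2.4(b)]{CFF}. For the inequality, let $R \to R' \leftarrow Q$ be an arbitrary quasi-deformation; I would like to compare $\rfd_R X$ with $\fd_Q(R' \utp_R X) - \fd_Q R'$ and then take the infimum over all quasi-deformations. Since $R \to R'$ is faithfully flat, base change gives $\rfd_R X = \rfd_{R'}(R' \utp_R X)$ — this should follow from the Chouinard formula for $\rfd$ together with the fact that depth and the structure of $\Spec$ behave well along a flat local extension (every prime of $R$ is the contraction of a prime of $R'$, and flat base change preserves depth). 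So it suffices to prove $\rfd_{R'} X' \le \fd_Q X' - \fd_Q R'$ for the deformation $Q \to R'$, where I write $X' := R' \utp_R X$.

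For the deformation step, let $Q \to R'$ have kernel generated by a $Q$-regular sequence of length $c$, so that $\fd_Q R' = c$ (a Koszul complex on the regular sequence resolves $R'$ over $Q$). The key is a change-of-rings comparison for large restricted flat dimension along a surjection whose kernel is generated by a regular sequence. Concretely, I expect $\rfd_{R'} X' \le \rfd_Q X' $ fails in general to be an equality, but there should be an inequality $\rfd_{R'} X' \le \rfd_Q X' = \fd_Q X'$, where the last equality uses that over $Q$ one has $\rfd = \fd$ whenever $\fd$ is finite (again by Chouinard, comparing with the Auslander–Buchsbaum–type depth formula for finite flat dimension). Hmm — but this only gives $\rfd_{R'} X' \le \fd_Q X'$, not the sharper bound with the $-c$. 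To get the $-c$ I would instead use the Chouinard formula directly on both sides: for $\fp' \in \Spec R'$ with preimage $\fq \subseteq Q$, one has $\depth Q_\fq - \depth R'_{\fp'} = c$ by the standard behaviour of depth modulo a regular sequence, and $\depth_{Q_\fq} X'_{\fq} = \depth_{R'_{\fp'}} X'_{\fp'}$ since $X'$ is an $R'$-complex and depth is insensitive to the ring it is computed over in this situation (the regular sequence lies in the annihilator). Subtracting, $\depth Q_\fq - \depth_{Q_\fq} X'_\fq = c + \bigl(\depth R'_{\fp'} - \depth_{R'_{\fp'}} X'_{\fp'}\bigr)$, and taking suprema — noting $\fd_Q X' = \rfd_Q X'$ when the former is finite, and that it is finite exactly when the quasi-deformation is one we care about — yields $\fd_Q X' = c + \rfd_{R'} X'$, i.e.\ $\rfd_{R'} X' = \fd_Q X' - \fd_Q R'$. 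Combined with the flat-base-change step this gives $\rfd_R X \le \fd_Q(R'\utp_R X) - \fd_Q R'$ for every quasi-deformation, hence $\rfd_R X \le \CIfd_R X$, with equality in the displayed computation along any quasi-deformation that realizes a finite value.

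Finally, if $\CIfd_R X$ is finite, choose a quasi-deformation achieving (or approaching) the infimum with $\fd_Q(R' \utp_R X) < \infty$; the equality $\rfd_R X = \fd_Q(R'\utp_R X) - \fd_Q R' = \CIfd_R X$ from the previous paragraph shows the two invariants agree, and the last displayed formula of the proposition is then just the Chouinard formula for $\rfd_R X$. The main obstacle I anticipate is the bookkeeping in the deformation step: making precise the claim that $\depth_{Q_\fq} X'_\fq = \depth_{R'_{\fp'}} X'_{\fp'}$ for a complex (as opposed to a module) and that $\fd_Q X'$ is finite precisely along the relevant quasi-deformations, and more importantly ensuring that $\Spec Q \to \Spec R'$ is surjective on the relevant primes so that no supremum is lost when passing between the two rings. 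One should also be slightly careful that $X'$ may have zero homology localized at some primes, but such primes contribute $-\infty$ or are excluded from $\Supp$, so they do not affect the suprema.
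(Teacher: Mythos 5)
Your argument is correct and is essentially the paper's proof: the paper simply asserts that $\rfd_Q(R'\utp_RX)-\rfd_QR'=\rfd_RX$ for any quasi-deformation and deduces everything from the definition together with the Chouinard formula of \cite[Theorem 2.4(b)]{CFF}, and your two-step verification (flat base change, then descent along the deformation, both via the Chouinard formula and the invariance of depth along surjective local maps) is exactly the ``one checks easily'' computation spelled out.
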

\begin{proof} One checks easily that for a quasi-deformation $R\rightarrow S\leftarrow
Q$, we have $\rfd_Q (X\utp_{R} S)-\rfd_Q S=\rfd_RX$. Now the
inequality follows easily from the definition of the complete
intersection flat dimension. The last equality follows from
\cite[Theorem 2.4(b)]{CFF}.
\end{proof}

The proof of the following proposition is easy, so we omit it.

\begin{prop}\label{P} Let $X$ be a homologically bounded $R$-complex.
For each prime ideal $\fp \in \Spec(R)$ there is an inequality
$$\CIfd_{R_{\fp }} {X_{\fp}}\le\CIfd_RX.$$
\end{prop}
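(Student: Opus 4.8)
The plan is to reduce the statement to a localization property of flat dimension over a suitable ring coming from a quasi-deformation, and then invoke Proposition~\ref{CH}. First I would observe that both sides behave well under passing along a quasi-deformation: if $R\to R'\leftarrow Q$ is a quasi-deformation that (essentially) computes $\CIfd_RX$, localizing at a prime of $R'$ lying over $\fp$ yields a quasi-deformation of $R_\fp$. Concretely, choose $\fp'\in\Spec R'$ minimal over $\fp R'$, let $\fq$ be its preimage in $Q$, and consider $R_\fp\to R'_{\fp'}\leftarrow Q_\fq$; since $R\to R'$ is flat and $R'\leftarrow Q$ is a deformation (kernel generated by a regular sequence), the localized diagram is again a quasi-deformation, with the deformation map $Q_\fq\to R'_{\fp'}$ having kernel generated by a $Q_\fq$-regular sequence. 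The point is that flatness is preserved under localization on both sides.

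The key computation is then to control $\fd_{Q_\fq}(R'_{\fp'}\utp_{R_\fp}X_\fp)$ and $\fd_{Q_\fq}R'_{\fp'}$ in terms of the unlocalized quantities. Here I would use that flat dimension of a homologically bounded complex localizes well — precisely, for a complex $W$ over $Q$ one has $\fd_{Q_\fq}W_\fq\le\fd_Q W$, and that $(R'\utp_RX)_{\fp'}\cong R'_{\fp'}\utp_{R_\fp}X_\fp$ by flat base change along the localizations. Combining these, for the chosen quasi-deformation of $R$ we get
$$\CIfd_{R_\fp}X_\fp\le\fd_{Q_\fq}(R'_{\fp'}\utp_{R_\fp}X_\fp)-\fd_{Q_\fq}R'_{\fp'}\le\bigl(\fd_Q(R'\utp_RX)-\fd_QR'\bigr),$$
where the second inequality needs a small argument that the \emph{difference} of the flat dimensions does not increase under localization. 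Taking the infimum over quasi-deformations of $R$ on the right gives $\CIfd_{R_\fp}X_\fp\le\CIfd_RX$.

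The main obstacle is precisely that last point: showing the difference $\fd_Q(R'\utp_RX)-\fd_QR'$ does not increase under localization, since each term can drop. The cleanest route, if $\CIfd_RX<\infty$, is to sidestep this entirely and instead invoke Proposition~\ref{CH}: in the finite case both $\CIfd_RX$ and $\CIfd_{R_\fp}X_\fp$ are computed by the Chouinard-type formula $\sup\{\depth R_\fq-\depth_{R_\fq}X_\fq\}$, and the supremum over $\fq\in\Spec R_\fp\subseteq\Spec R$ is clearly bounded by the supremum over all of $\Spec R$, giving the inequality immediately (and one checks $\CIfd_{R_\fp}X_\fp\le\CIfd_RX<\infty$ a priori from the localized quasi-deformation so that Proposition~\ref{CH} applies on both sides). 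If $\CIfd_RX=\infty$ the inequality is trivial. So in fact the whole proof collapses to: reduce to the finite case, then apply the localization-compatible formula of Proposition~\ref{CH} — which is exactly why the authors call this proof "easy" and omit it.
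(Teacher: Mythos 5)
The paper gives no proof of this proposition (the authors explicitly omit it as ``easy''), so there is nothing to compare against line by line; judged on its own, your argument is essentially correct and is the standard one: localize a quasi-deformation $R\to R'\leftarrow Q$ at a prime $\fp'$ of $R'$ over $\fp$ with preimage $\fq$ in $Q$, check that $R_\fp\to R'_{\fp'}\leftarrow Q_\fq$ is again a quasi-deformation, and use flat base change together with $\fd_{Q_\fq}(W_\fq)\le\fd_QW$. The one substantive comment is that what you single out as ``the main obstacle'' --- that the difference $\fd_Q(R'\utp_RX)-\fd_QR'$ might increase because the subtracted term can drop --- is not actually an obstacle: if the deformation has codimension $c$, then $\fd_QR'=\pd_QR'=c$ (Koszul resolution plus Auslander--Buchsbaum, since $\depth_QR'=\depth Q-c$), and the images of the regular sequence remain a $Q_\fq$-regular sequence because the localized Koszul complex still resolves the nonzero module $R'_{\fp'}$; hence $\fd_{Q_\fq}R'_{\fp'}=c$ as well. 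The subtracted terms are therefore \emph{equal}, and your chain of inequalities reduces to $\fd_{Q_\fq}\bigl((R'\utp_RX)_{\fp'}\bigr)\le\fd_Q(R'\utp_RX)$, which you already have. This makes the detour through Proposition~\ref{CH} unnecessary; note also that the detour does not actually avoid the construction, since you still need the localized quasi-deformation to certify $\CIfd_{R_\fp}X_\fp<\infty$ before Proposition~\ref{CH} can be applied on the localized side (and your parenthetical ``one checks $\CIfd_{R_\fp}X_\fp\le\CIfd_RX$ a priori'' should be read only as checking finiteness --- as literally written it assumes the conclusion). With the observation that $\fd_QR'=\fd_{Q_\fq}R'_{\fp'}=c$, your first, direct route closes completely and is the cleaner proof.
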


The \emph{Gorenstein flat dimension} of each homologically bounded
complex of $R$-modules $X$ is defined by
$$
\gfd_RX:=\inf\{\sup\{n|G_n\neq0\}|G\text{ is a Gorenstein flat
resolution of }X\}.
$$
The reader may consult \cite{C} or the book of Enochs and Jenda
\cite{bookEJ} for details.

Let $X$ be a homologically bounded $R$-complex. Then by using some
known result and \cite[Theorem 2.4]{SSY} there is the following
sequence of inequalities
$$\rfd_RX\le\gfd_RX\le\CIfd_RX\le\fd_RX,$$
with equality to the left of any finite number.

\section{Depth formula}

In this section we prove a depth formula for complexes of finite
complete intersection flat dimension.

Let $Q$ be a deformation of codimension one of $R$. Suppose that $M$
is an $R$-module. Let $A$ be the \emph {Koszul} complex resolving
$R$ over $Q$ with augmentation $\kappa:A\longrightarrow R$ and the
structure map $\eta^A:Q\longrightarrow A$. It follows from
\cite[(2.2.7)]{Av} that $M$ has a $Q$-projective resolution $U$,
such that it is a DG module over $A$. Now from \cite[(3.1.1)]{Av} it
is easy to see that $M$ has an $R$-projective resolution $F(U)$,
such that the following exact sequence of $R$-complexes is
degreewise split
\begin{displaymath}
\xymatrix{ 0 \ar[r]  & R\otimes_Q U \ar[r]& F(U) \ar[r]  & \Sigma^2
F(U) \ar[r] & 0,}
\end{displaymath}
in which
$$ F_n(U)=\displaystyle{\bigoplus_{p=0}^{n}}  \bar{A}_1^{(p)}\otimes_R
\bar{U}_{n-2p},$$
where $\bar{A}_1^{(p)}=\bar{A_1}\otimes_R\ldots
\otimes_R\bar{A_1}$ ($p$ times) and
\begin{align*}
 & \partial_n^{F(U)}(\bar{a}_1\otimes \bar{a}_2\otimes \ldots \otimes
\bar{a}_p\otimes \bar{u}) \\[1ex]
= & \bar{a}_1\otimes \bar{a}_2\otimes \ldots \otimes
\bar{a}_p\otimes
\partial^{\bar{U}}(\bar{u})-\bar{a}_1\otimes \bar{a}_2\otimes \ldots \otimes
\bar{a}_{p-1}\otimes \bar{a}_p\bar{u}.
\end{align*}
In \cite[(5.10)]{Yo} Yoshino generalized the above exact sequence,
for bounded complexes with finite homology modules. His method is
based on constructing the Eisenbud resolutions for complexes. This
exact sequence plays an important role in this paper. In the next
result we extend it to bounded complexes, which is the key result
for the proof of the depth formula. For our main purpose, we only
need to show that if $X\in{\mathcal D}_{b}(R)$, then there is a
$Q$-projective resolution $W$ of $X$ and an $R$-projective
resolution $F(W)$ of $X$, such that the following sequence of
$R$-complexes is degreewise split
\begin{displaymath}
\xymatrix{ 0 \ar[r]  & R\otimes_Q W \ar[r]& F(W) \ar[r]  & \Sigma^2
F(W) \ar[r] & 0.}
\end{displaymath}
This is the content of the following proposition.

\begin{prop}\label{split} Let $Q$ be a deformation of
codimension one of $R$ and let $X\in{\mathcal D}_{b}(R)$. Then there
exists a $Q$-projective resolution $W$ of $X$ and an $R$-projective
resolution $F(W)$ of $X$, such that, the following sequence of
$R$-complexes is degreewise split
\begin{displaymath}
\xymatrix{ 0 \ar[r]  & R\otimes_Q W \ar[r]& F(W) \ar[r]  & \Sigma^2
F(W) \ar[r] & 0.}
\end{displaymath}
\end{prop}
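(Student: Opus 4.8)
The plan is to reduce the case of a general bounded complex $X$ to the module case already recorded above from \cite{Av}, by replacing $X$ with a single module whose syzygies ``encode'' $X$. Concretely, since $X\in{\mathcal D}_b(R)$, there is an integer $s$ with $\inf(X)\ge -s$ after a shift; choose a $Q$-projective resolution $P$ of $X$ (as a $Q$-complex via the structure map $Q\to R$), which we may take to be bounded below. Truncating $P$ far enough to the right, say at homological degree $n_0\gg 0$ above $\sup(X)$ and beyond the point where the relevant homology stabilizes, we obtain a bounded complex, and the cokernel module $C=\Coker(\partial^P_{n_0+1})$ together with the hard-truncated tail $P_{\ge n_0}$ fits $X$ into a short exact sequence/quasi-isomorphism picture in which $C$ is an honest $Q$-module with a prescribed $Q$-projective resolution agreeing with $P$ in high degrees. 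The point is that $F(-)$ is a functor on $Q$-projective complexes built degreewise by the explicit formula $F_n(U)=\bigoplus_{p=0}^n \bar A_1^{(p)}\otimes_R\bar U_{n-2p}$, so it commutes with hard truncations in high degrees up to a bounded shift.

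First I would set up the DG-module resolution: by \cite[(2.2.7)]{Av} every $Q$-module has a $Q$-projective resolution carrying a DG-$A$-module structure, where $A=\kappa\colon A\to R$ is the Koszul complex of the codimension-one deformation; I would upgrade this to complexes by resolving $X$ as a $Q$-complex and then, using the lifting property of semiprojective resolutions over the DG algebra $A$ (equivalently, applying \cite[(2.2.7)]{Av} to the truncation module $C$ and splicing), producing a $Q$-projective resolution $W$ of $X$ that is simultaneously a DG $A$-module. Then I would \emph{define} $F(W)$ by the same formula as in the module case, $F_n(W)=\bigoplus_{p=0}^n \bar A_1^{(p)}\otimes_R \bar W_{n-2p}$ with the displayed differential, and check three things: (i) $F(W)$ is a complex of projective $R$-modules; (ii) the evident maps $R\otimes_Q W\hookrightarrow F(W)\twoheadrightarrow \Sigma^2 F(W)$ (inclusion of the $p=0$ summand, projection onto $p\ge 1$ re-indexed) form a sequence of $R$-complexes that is degreewise split, exactly as in \cite[(3.1.1)]{Av}; and (iii) $F(W)$ is a resolution of $X$, i.e. $\H(F(W))\cong\H(X)$, which follows from the long exact homology sequence of that short exact sequence together with $R\otimes_Q W\simeq R\utp_Q W\simeq X$ (since $R$ is perfect over $Q$ of the right kind and $W$ is a $Q$-projective resolution of $X$) by the same induction/ascending-union argument Avramov uses, now bookkeeping the (bounded) amplitude of $\H(X)$.

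The main obstacle I anticipate is (iii): in the module case one knows $F(U)$ is acyclic in positive degrees essentially because $R\otimes_Q U$ is, and the two-periodic short exact sequence forces the higher homology of $F(U)$ to vanish by a telescoping argument; for a genuinely bounded complex $X$ the higher homology of $R\otimes_Q W$ need not vanish, so one must instead run the long exact sequence $\cdots\to\H_i(R\otimes_Q W)\to \H_i(F(W))\to \H_{i-2}(F(W))\to\cdots$ and argue, using that $F(W)$ is bounded below and that $\H(X)$ is bounded, that the periodicity map $\H_i(F(W))\to\H_{i-2}(F(W))$ is an isomorphism for $i\gg 0$ between groups that eventually vanish, hence all the ``extra'' homology dies and $\H(F(W))\cong \H(R\otimes_Q W)\cong\H(X)$ degree by degree. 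The degreewise splitting in (ii) is the routine part: it is visible from the direct-sum decomposition of $F_n(W)$, with the splitting $\Sigma^2F(W)\to F(W)$ given by $\bar a_1\otimes\cdots\otimes\bar a_p\otimes\bar u\mapsto$ (a chosen lift) exactly as in \cite{Av}, and one only needs that it is $R$-linear, not a chain map. Finally I would note that $W$ and $F(W)$ can be taken bounded to the right whenever $X$ is, so everything stays inside ${\mathcal D}_b(R)$ as needed for the applications in the next section.
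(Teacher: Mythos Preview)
Your overall strategy---define $F(W)$ directly by the universal-resolution formula and verify it is an $R$-projective resolution of $X$---is different from the paper's, which instead inducts on $\amp(P)$ for an $R$-projective resolution $P\simeq X$: one truncates $P$ into $P_{\le t}$ and $P_{>t}$, applies the induction hypothesis to each piece, lifts the connecting morphism $\gamma:P_{>t}\to\Sigma P_{\le t}$ to a map $\beta:U\to V$ of $Q$-resolutions, and checks that $F$ commutes with mapping cones so that $F(C(\beta))=C(F(\beta))$ resolves $\Sigma P$. That approach never needs to analyze the homology of $F(W)$ from scratch; it bootstraps from the module case via cones.

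Your route could in principle work, but step (iii) has a genuine gap. First, the assertion $R\otimes_Q W\simeq R\utp_Q W\simeq X$ is false: since $R=Q/(x)$ with $x$ acting as zero on the $R$-complex $X$, one has $R\utp_Q X\simeq X\oplus\Sigma X$, not $X$. Second, and more seriously, your telescoping argument does not close. The long exact sequence does give $\H_i(F(W))\cong\H_{i-2}(F(W))$ for $i>\sup(X)+2$, so the homology of $F(W)$ is eventually $2$-periodic; but nothing you have written forces these periodic groups to be zero. ``Bounded below'' gives no control at the top, and you cannot assume $\H_i(F(W))$ eventually vanishes without assuming the conclusion. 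In the module case Avramov does \emph{not} deduce acyclicity of $F(U)$ from this short exact sequence; rather, acyclicity of the bar construction is proved directly (it computes $R\utp_A U$ over the Koszul DG algebra $A$, and $A\to R$ is a quasi-isomorphism). To make your approach work you would have to establish the analogous statement for complexes---that $F(W)\simeq R\utp_A W\simeq X$ when $W$ is a semiprojective DG $A$-module resolution of $X$---which is exactly the nontrivial content, and which your outline does not supply. The paper's amplitude induction sidesteps this entirely.
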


\begin{proof} Let $P\simeq X$ be an $R$-projective
resolution of $X$, and set $t=\inf X=\inf P$. We will induct on
$\amp(P)$. If $\amp(P)=0$ then $P\simeq\Sigma^{t}\H_{t}(P)$. So that
$X$ is a shift of an $R$-module. Now the assertion follows from the
exact sequence before the proposition.

Next suppose that $\amp(P)>0$. Consider the following truncations of
$P$
\begin{displaymath}
\xymatrix{ P_{\leq t}=0 \ar[r]  & 0 \ar[r]& P_t \ar[r] &
P_{t-1} \ar[r] & \cdots,\\
P_{>t}=\cdots \ar[r]  & P_{t+2} \ar[r]& P_{t+1} \ar[r] & 0\ar[r] &
0.}
\end{displaymath}
Thus we have the following exact sequence, which is degreewise split
of $R$-complexes, and so it is degreewise split as $Q$-complexes.
\begin{displaymath}
\xymatrix{ 0 \ar[r]  & P_{\leq t} \ar[r]& P \ar[r]  & P_{>t} \ar[r]
& 0.}
\end{displaymath}
It is clear that $\amp(P_{\leq t})$ and $\amp(P_{>t})$ are less than
$\amp(P)$. Let $V$ and $U$ be $Q$-projective resolutions of $\Sigma
P_{\leq t}$ and $P_{>t}$, respectively. Note that the differential
$\partial_{t+1}^P: P_{t+1}\rightarrow P_t$ induces an $R$-linear
(and so $Q$-linear) morphism $\gamma: P_{>t}\rightarrow \Sigma
P_{\leq t}$. Hence there is a $Q$-linear morphism $\beta:
U\rightarrow V$ of $Q$-projective resolutions. By \cite[(1.24)]{F}
or \cite[Section 1.5]{Weib} there is an exact sequence of
$Q$-complexes as the following
\begin{displaymath}
\xymatrix{ 0 \ar[r]  & V \ar[r]& C(\beta) \ar[r]  & \Sigma U \ar[r]
& 0,}
\end{displaymath}
in which $C(\beta)$ is the mapping cone of $\beta$. Since the
mapping cone of $\gamma$ is $\Sigma P$, it follows that $C(\beta)$
is a $Q$-projective resolution of $\Sigma P$. By the induction
hypothesis there are $R$-projective resolutions $F(V)$ and $F(U)$
for $\Sigma P_{\leq t}$ and $P_{>t}$, respectively, such that, the
following sequences of $R$-complexes are degreewise split
\begin{displaymath}
\xymatrix{ 0 \ar[r]  & R\otimes_Q V \ar[r]& F(V) \ar[r]  &
\Sigma^{2} F(V) \ar[r] & 0,\\ 0 \ar[r]  & R\otimes_Q U \ar[r]& F(U)
\ar[r]  & \Sigma^{2} F(U) \ar[r] & 0.}
\end{displaymath}
Now define the morphism $F(\beta): F(U)\rightarrow F(V)$ by
$$
F(\beta)_n(\bar{a}_1\otimes \bar{a}_2\otimes
\ldots \otimes \bar{a}_p\otimes \bar{u}_{n-2p})=\bar{a}_1\otimes
\bar{a}_2\otimes \ldots \otimes \bar{a}_p\otimes
\bar{\beta}_{n-2p}(\bar{u}_{n-2p}).
$$
It is easy to see that $F(\beta)$ is a morphism of $R$-complexes.
Using the exact sequence
\begin{displaymath}
\xymatrix{ 0 \ar[r]  & F(V) \ar[r]& C(F(\beta)) \ar[r]  & \Sigma
F(U) \ar[r] & 0,}
\end{displaymath}
we get that $C(F(\beta))$ is an $R$-projective resolution of $\Sigma
P$. Now by considering the induction hypothesis we can see that the
following diagram is commutative with exact rows
\begin{displaymath}
\xymatrix{ 0 \ar[r]  & R\otimes_Q U \ar[r] \ar[d]^{R\otimes_Q\beta}
& F(U) \ar[r] \ar[d]^{F(\beta)}& \Sigma^{2} F(U) \ar[r]
\ar[d]^{\Sigma^2F(\beta)}& 0\\
0 \ar[r]  & R\otimes_Q V \ar[r]& F(V) \ar[r]  & \Sigma^{2} F(V)
\ar[r] & 0.}
\end{displaymath}
The above exact sequences induces the following exact sequence
\begin{displaymath}
\xymatrix{ 0 \ar[r]  & C(R\otimes_Q\beta) \ar[r]& C(F(\beta)) \ar[r]
& \Sigma^2 C(F(\beta)) \ar[r] & 0.}
\end{displaymath}
It is also straightforward to see that $F(C(\beta))=C(F(\beta))$.
Therefore, the following sequence is exact
\begin{displaymath}
\xymatrix{ 0 \ar[r]  & R\otimes_Q C(\beta) \ar[r]& F(C(\beta))
\ar[r] & \Sigma^2 F(C(\beta)) \ar[r] & 0.}
\end{displaymath}
Now setting $W=\Sigma^{-1}C(\beta)$ we get the desired exact
sequence.
\end{proof}

\begin{lem}\label{sur} Let $Q$ be a deformation of
codimension $c$ of $R$ and let $X, Y\in{\mathcal D}_{b}(R)$, such
that $\lambda=\sup(X\utp_RY)<\infty.$ Then
\begin{itemize}
\item[(a)] $\sup(X\utp_QY)=\lambda+c.$
\item[(b)] $\depth_R(X\utp_RY)=\depth_Q(X \utp_Q Y)+c.$
\end{itemize}
\end{lem}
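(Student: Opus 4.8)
The plan is to identify $X\utp_QY$ explicitly with a direct sum of shifts of $Z:=X\utp_RY$, and then read off both assertions. Write $R=Q/(\underline{x})$ with $\underline{x}=x_1,\dots,x_c$ a $Q$-regular sequence, and let $K=K^{Q}(\underline{x})$ be the associated Koszul complex, a bounded $Q$-free resolution of $R$. Since each $x_i$ maps to $0$ in $R$, the complex $R\otimes_QK$ is the Koszul complex on the zero sequence of length $c$; it has zero differential, so as a bounded complex of free $R$-modules $R\otimes_QK\cong\bigoplus_{i=0}^{c}(\Sigma^{i}R)^{\binom{c}{i}}$. Note also that $Z$ is homologically bounded --- bounded above because $\sup Z=\lambda<\infty$, and bounded below because $\inf Z\ge\inf X+\inf Y$ --- so all depths and suprema occurring below are as expected.

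The key step is the isomorphism
$$
X\utp_QY\ \simeq\ \bigoplus_{i=0}^{c}\bigl(\Sigma^{i}Z\bigr)^{\binom{c}{i}}\qquad\text{in }\mathcal{D}(R).
$$
To prove it, first use associativity of the derived tensor product to write $X\utp_QY\simeq(X\utp_QR)\utp_RY$. Then compute $X\utp_QR$ by resolving $R$ over $Q$ by $K$: since $K$ is a bounded-below complex of projective $Q$-modules, $X\utp_QR\simeq X\otimes_QK=X\otimes_R(R\otimes_QK)$, and since $R\otimes_QK$ is a bounded complex of free $R$-modules this equals $X\utp_R(R\otimes_QK)\simeq\bigoplus_{i=0}^{c}(\Sigma^{i}X)^{\binom{c}{i}}$. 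Tensoring with $Y$ over $R$ and using that $\utp_R$ commutes with shifts and finite direct sums gives the displayed isomorphism.

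Assertion (a) is now immediate: taking suprema, $\sup(X\utp_QY)=\max_{0\le i\le c}(i+\sup Z)=c+\lambda$. For (b), I would invoke the fact that restriction of scalars along the surjection $Q\to R$ preserves depth, i.e. $\depth_QW=\depth_RW$ for every $W\in\mathcal{D}_b(R)$ (the $\fm_Q$- and $\fm_R$-torsion functors agree on $R$-complexes; see \cite{FI}). Combining this with $\depth_R(\Sigma^{i}Z)=\depth_RZ-i$ and $\depth_R(W_1\oplus W_2)=\min\{\depth_RW_1,\depth_RW_2\}$, the displayed isomorphism yields
$$
\depth_Q(X\utp_QY)=\depth_R(X\utp_QY)=\min_{0\le i\le c}(\depth_RZ-i)=\depth_RZ-c,
$$
which is precisely $\depth_R(X\utp_RY)=\depth_Q(X\utp_QY)+c$.

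The only point requiring genuine care is the chain of quasi-isomorphisms computing $X\utp_QR$: one must tensor with $K$ (respectively $R\otimes_QK$) rather than with $R$ itself, because an $R$-projective resolution of $X$ need not consist of $Q$-flat modules; since $K$ and $R\otimes_QK$ are bounded(-below) complexes of projectives, tensoring with them preserves quasi-isomorphisms, so no flatness hypothesis on $X$ is needed and the identifications above are valid. If one prefers to avoid the exterior-algebra bookkeeping, an equivalent route is to factor $Q\to R$ into $c$ codimension-one deformations $Q=R_0\twoheadrightarrow\dots\twoheadrightarrow R_c=R$ and induct, using that for a codimension-one deformation $R=Q/(x)$ one has $R\utp_QY\simeq Y\oplus\Sigma Y$ (as $x$ acts as $0$ on $Y$), hence $X\utp_QY\simeq Z\oplus\Sigma Z$; part (a), applied at each stage, guarantees finiteness of the intermediate suprema needed to run the induction for (b).
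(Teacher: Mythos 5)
Your proof is correct, but it takes a genuinely different route from the paper's. The paper first establishes (Proposition \ref{split}, by induction on amplitude via mapping cones) a degreewise-split exact sequence $0\to R\otimes_QW\to F(W)\to\Sigma^2F(W)\to 0$ coming from the bar/Eisenbud construction, then proves the case $c=1$ by tensoring this sequence with $Y$ and chasing the resulting long exact sequences in homology and in $\uhom_Q(\ell,-)$, and finally climbs from codimension one to codimension $c$ by factoring $Q\to R$ through intermediate deformations. You instead resolve $R$ over $Q$ by the Koszul complex $K$ and observe that $R\otimes_QK$ has zero differential, which yields the explicit splitting $X\utp_QY\simeq\bigoplus_{i=0}^{c}(\Sigma^iZ)^{\binom{c}{i}}$ with $Z=X\utp_RY$; both (a) and (b) are then read off directly. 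Your identification is valid: the associativity $X\utp_QY\simeq(X\utp_QR)\utp_RY$ is standard, $K$ is K-flat over $Q$ so $X\utp_QR\simeq X\otimes_R(R\otimes_QK)$, and you correctly flag that one cannot tensor an $R$-projective resolution of $X$ over $Q$ directly. What each approach buys: yours is shorter, bypasses Proposition \ref{split} entirely, and renders moot the paper's discussion in part (b) of when $\depth_R(X\utp_RY)$ and $\depth_Q(X\utp_QY)$ are simultaneously finite (the direct-sum decomposition makes this automatic, including the degenerate cases $\pm\infty$); the paper's split exact sequence is a finer structural statement — it produces an explicit $R$-free resolution $F(W)$ interlocking with the $Q$-resolution — and its long-exact-sequence argument is the template that survives in situations where the relevant connecting maps do not vanish and no such clean splitting exists. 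For this particular lemma, however, your Koszul-complex argument is a complete and simpler substitute.
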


\begin{proof} (a) We will induct
on $c$. Suppose that $c=1.$ Using Proposition \ref{split} it follows
that there is the following degreewise split sequence of complexes
of projective $R$-modules
\begin{displaymath}
\xymatrix{ \Gamma:= 0 \ar[r]  & R\otimes_Q W \ar[r]& F(W) \ar[r] &
\Sigma^{2}F(W) \ar[r] & 0,}
\end{displaymath}
where $W$ and $F(W)$, respectively are $Q$ and $R$-projective
resolutions of $X$. As the above exact sequence is degreewise split,
we get that $\Gamma\otimes_R Y$ is an exact sequence of
$R$-complexes. Thus taking homology of $\Gamma\otimes_R Y$, we find
that $\sup(X\utp_QY)=\lambda+1.$ Now suppose that $c>1$. Let
$R_1=Q/(x_1, x_2,\cdots,x_{c-1})$ and $R=R_1/(x_c)R_1$, where $x_1,
x_2,\cdots,x_c$ is a $Q$-regular sequence. Thus by the induction
hypothesis we have $\sup(X \utp_{R_1}Y)=\lambda+1$ and $\sup(X
\utp_Q Y)=\sup(X \utp_{R_1}Y)+c-1$. Hence the proof of (a) is
complete combining the last equalities.

(b): since
$Q\to R$ is surjective, it is known that $\depth_Q(X\utp_R
Y)=\depth_R(X\utp_R Y)$, see \cite[Proposition 5.2(1)]{I}. We claim
that $\depth_R(X\utp_RY)$ and $\depth_Q(X\utp_QY)$ are
simultaneously finite numbers. Indeed, consider the following exact
sequence of complexes
$$0\to X\utp_Q Y\to X\utp_R Y\to \Sigma^2(X\utp_R Y)\to
0.$$ Let $\depth_R(X\utp_RY)$ be finite. Then $\depth_Q(X\utp_R Y)$
is finite. By part (a) we have $\sup(X\utp_QY)<\infty$ and so
$\depth_Q(X\utp_Q Y)>-\infty$, see for example \cite[Section 2,
Observation (3)]{I}. By \cite[Proposition 5.1]{I} we have that $\depth_Q(X\utp_Q Y)$ is finite.
Now assume that $\depth_Q(X\utp_Q Y)$ is finite. By our assumption
$\sup(X\utp_RY)<\infty$. Therefore $\depth_Q(X\utp_R Y)>-\infty$. Again by \cite[Proposition 5.1]{I} we have that $\depth_Q(X\utp_R Y)$ is finite.

Suppose $c=1$. Since $\Gamma$ is degreewise split,
$\Gamma\otimes_{R} Y$ is an exact sequence of complexes. Now from
\cite[(6.49)]{F} we get
$$\sup\uhom_{Q}(\ell, X \utp_{R} Y)=\sup\uhom_{Q}(\ell,X \utp_Q Y)-1,$$  where $\ell$ is the residue field of $Q$.
It follows that $\depth_{Q}(X\utp_{R} Y)=\depth_Q(X \utp_Q Y)+1$.
Therefore
$$\depth_{R}(X\utp_{R} Y)=\depth_Q(X \utp_Q Y)+1.$$
If $c>1$, using the notations of part (a) we find that
$$
\depth_{R}(X \utp_{R} Y)=\depth_{R_1}(X \utp_{R_1} Y)+1,
$$
$$
\depth_{R_1}(X \utp_{R_1}Y)=\depth_Q(X \utp_Q Y)+c-1.
$$
Now by joining the last equalities, the proof is complete.
\end{proof}

Now we are in the position of proving \emph{depth formula} for the
complete intersection flat dimension of complexes.

\begin{thm}\label{DF} Let $R$ be a ring and let $X, Y\in{\mathcal
D}_{b}(R)$ such that $\CIfd_RX<\infty$ and
$\sup(X\utp_RY)<\infty$. Then
$$\depth_R(X\utp_RY)=\depth_RX+\depth_RY-\depth R.$$
\end{thm}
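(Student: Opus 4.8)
The plan is to reduce the statement to the already-proved local case via a quasi-deformation, and then use Lemma~\ref{sur} together with the Auslander-Buchsbaum-type formula for flat dimension. First I would choose a quasi-deformation $R\to R'\leftarrow Q$ of codimension $c$ realizing the finiteness $\CIfd_RX<\infty$, so that $\fd_Q(R'\utp_RX)<\infty$. Since $R\to R'$ is faithfully flat, the depth formula over $R$ is equivalent to the depth formula over $R'$: indeed $\depth_{R'}(R'\utp_RZ)=\depth_RZ+\depth_{R'}(R'\utp_Rk)$ for any $Z\in{\mathcal D}_b(R)$, and similarly $\depth R'=\depth R+\depth_{R'}(R'\utp_Rk)$, while $R'\utp_R(X\utp_RY)\simeq (R'\utp_RX)\utp_{R'}(R'\utp_RY)$ and $\sup$ is preserved. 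So I may replace $R$ by $R'$, $X$ by $R'\utp_RX$, $Y$ by $R'\utp_RY$, and assume from the start that $R=Q/J$ is a genuine deformation of codimension $c$ with $\fd_QX<\infty$; note $\sup(X\utp_RY)<\infty$ is preserved.

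Next I would pass from $R$ to $Q$ using Lemma~\ref{sur}. Part (b) of that lemma gives $\depth_R(X\utp_RY)=\depth_Q(X\utp_QY)+c$. Since $\fd_QX<\infty$ and (by Lemma~\ref{sur}(a)) $\sup(X\utp_QY)=\sup(X\utp_RY)+c<\infty$, the pair $X,Y$ over $Q$ falls under the classical situation where the depth formula is known for complexes of finite flat (equivalently, finite projective, since $Q$ is local) dimension — this is the complex-level Auslander-Buchsbaum/depth formula, e.g. from \cite{F} or \cite{I}: $\depth_Q(X\utp_QY)=\depth_QX+\depth_QY-\depth Q$. Substituting, $\depth_R(X\utp_RY)=\depth_QX+\depth_QY-\depth Q+c$.

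It then remains to convert the $Q$-depths back to $R$-depths. Since $R=Q/J$ with $J$ generated by a $Q$-regular sequence of length $c$, we have $\depth Q=\depth R+c$, so the two copies of $c$ cancel, leaving $\depth_R(X\utp_RY)=\depth_QX+\depth_QY-\depth R$. Finally I must show $\depth_QX=\depth_RX$ and $\depth_QY=\depth_RY$. For $X$ this is standard because $\fd_QX<\infty$: one can use that $X\simeq R\utp_QX$ is false in general, but $\depth$ behaves well under the deformation — more precisely, since $Q\to R$ is surjective, $\depth_Q Z=\depth_R Z$ for any complex $Z$ of $R$-modules viewed over $Q$, by \cite[Proposition 5.2(1)]{I}. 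But $X$ and $Y$ are $R$-complexes here, so directly $\depth_QX=\depth_RX$ and $\depth_QY=\depth_RY$ by that same result; no finiteness of $\fd$ is even needed for this last step.

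The main obstacle I anticipate is the very first reduction: justifying cleanly that the depth formula descends along the faithfully flat map $R\to R'$, in particular verifying $R'\utp_R(X\utp_RY)\simeq(R'\utp_RX)\utp_{R'}(R'\utp_RY)$ and tracking the "correction term" $\depth_{R'}(R'\utp_Rk)$ so that it cancels on both sides; and making sure $\sup(X\utp_RY)<\infty$ transfers to $\sup\big((R'\utp_RX)\utp_{R'}(R'\utp_RY)\big)<\infty$, which it does by flat base change for homology. Once one is over an honest deformation, the argument is a short chain of equalities built from Lemma~\ref{sur}, the classical depth formula over $Q$, the regular-sequence identity $\depth Q=\depth R+c$, and the surjectivity-invariance of depth.
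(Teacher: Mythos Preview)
Your proposal is correct and follows essentially the same route as the paper's own proof: choose a quasi-deformation, use faithfully flat base change to pass to $R'$ (the paper carries primes $X',Y'$ while you replace $R$ by $R'$ upfront, a purely cosmetic difference), apply Lemma~\ref{sur}(b) to trade the $R'$-tensor for the $Q$-tensor, invoke the classical depth formula over $Q$ where $\fd_QX'<\infty$ (\cite[(12.8)]{F} or \cite[Theorem 4.1]{I}), and finish with $\depth Q=\depth R'+c$ and the surjectivity-invariance of depth from \cite[Proposition 5.2(1)]{I}. You are in fact slightly more explicit than the paper about the last step $\depth_QX'=\depth_{R'}X'$, which the paper leaves implicit in ``combining the above equalities.''
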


\begin{proof} Since $\CIfd_RX<\infty$, so
that there is a codimension $c$ quasi-deformation, $R\rightarrow
R'\leftarrow Q$, such that $\CIfd_RX=\fd_Q(R'\utp_RX)-c$. We have
the isomorphisms $X'\utp_{R'}Y'=(X\utp_RY)'$, where $W'$ stands
for $R'\utp_RW$, for an $R$-complex $W$. Since $R'$ is a
faithfully flat $R$-module, then $\sup(X'\utp_{R'}Y')<\infty$.
Thus by Lemma \ref{sur}(b) we have the following equality
$$\depth_{R'}(X'\utp_{R'} Y')=\depth_Q(X' \utp_Q Y')+c.$$
Since $\fd_{Q}X'<\infty$, by \cite[(12.8)]{F} (or \cite[Theorem
4.1]{I}) we have
$$\depth_Q(X' \utp_Q Y')=\depth_QY'+\depth_QX'-\depth Q.$$
On the other hand, by \cite[Corollary 2.6]{I} we find the following
equalities
\begin{align*}
\depth R'= & \depth R+\depth R'/\fm R', \\[1ex]
\depth_{R'}X'= & \depth_RX+\depth R'/\fm R', \\[1ex]
\depth_{R'}Y'= &\depth_RY+\depth R'/\fm R', \\[1ex]
\depth_{R'}(X'\utp_{R'} Y')= &\depth_{R}(X\utp_{R} Y)+\depth R'/\fm
R'.
\end{align*}
By combining the above equalities we get
$$
\depth_R(X\utp_RY)=\depth_RX+\depth_RY-\depth R,
$$
as desired.
\end{proof}

The following corollary generalizes a result of Auslander \cite{A2}
and improves a result of Huneke and Weigand \cite[Proposition
2.5]{HW} where the ring is assumed to be complete intersection and
the modules $M$ and $N$ finitely generated. Also it generalizes
results of Araya and Yoshino \cite[Theorem 2.5]{AY} and Iyengar
\cite[Theorem 4.3]{I}.

\begin{cor} \label{Huneke}
Let $R$ be a ring and let $M,N$ be two $R$-modules such that
$\Tor_i^R(M,N)=0$ for all $i\geq1$. If $\CIfd_RM<\infty$, then
$$
\depth_R(M\otimes_RN)=\depth_RM+\depth_RN-\depth R.
$$
\end{cor}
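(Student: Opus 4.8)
The plan is to deduce Corollary \ref{Huneke} directly from Theorem \ref{DF} by translating the module-theoretic hypotheses into the derived-category language in which the theorem is phrased. First I would view $M$ and $N$ as objects of ${\mathcal D}_b(R)$ concentrated in degree zero. The hypothesis $\CIfd_RM<\infty$ is literally the finiteness hypothesis on $X=M$ in Theorem \ref{DF}, so nothing needs to be done there. The remaining task is to check that the hypothesis $\Tor_i^R(M,N)=0$ for all $i\ge 1$ forces $\sup(M\utp_RN)<\infty$ and, moreover, that $M\utp_RN\simeq M\otimes_RN$ as complexes, so that the conclusion of Theorem \ref{DF} reads $\depth_R(M\otimes_RN)=\depth_RM+\depth_RN-\depth R$.

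The key computation is that the homology of $M\utp_RN$ is exactly Tor: $\H_i(M\utp_RN)\cong\Tor_i^R(M,N)$ for all $i$, which is the standard interpretation of the derived tensor product. In particular $\H_i(M\utp_RN)=0$ for $i<0$ always (Tor in negative degrees vanishes), and by hypothesis $\H_i(M\utp_RN)=0$ for $i\ge 1$; hence $M\utp_RN$ has homology concentrated in degree zero, with $\H_0(M\utp_RN)\cong M\otimes_RN$. This gives both $\sup(M\utp_RN)=0<\infty$ (so Theorem \ref{DF} applies) and the quasi-isomorphism $M\utp_RN\simeq M\otimes_RN$, whence $\depth_R(M\utp_RN)=\depth_R(M\otimes_RN)$, since depth is an invariant of the derived category. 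Also $\depth_RM$ and $\depth_RN$ in the sense of the complex-valued definition $\depth_RX=-\sup\uhom_R(k,X)$ agree with the usual module depth, so the statement of Theorem \ref{DF} specializes precisely to the asserted formula.

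I do not expect any serious obstacle here: the proof is essentially a one-line invocation of Theorem \ref{DF} once one records that vanishing of higher Tor is the same as $M\utp_RN$ being homologically a module placed in degree zero. The only point requiring a sentence of care is the identification of the complex-theoretic depth and tensor product with their classical counterparts, and the observation that $R'\utp_RX$ finiteness-of-fd hypotheses embedded in $\CIfd$ are unaffected. So the proof I would write is short: embed $M,N$ in ${\mathcal D}_b(R)$, note $\H(M\utp_RN)$ is concentrated in degree $0$ and equals $M\otimes_RN$ there, apply Theorem \ref{DF}, and translate back.
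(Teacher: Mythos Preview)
Your proposal is correct and follows essentially the same approach as the paper's proof: identify $\H_i(M\utp_RN)$ with $\Tor_i^R(M,N)$, conclude from the vanishing hypothesis that $\sup(M\utp_RN)=0$ and $M\utp_RN\simeq M\otimes_RN$, and then invoke Theorem \ref{DF}. The paper's proof is simply a terser version of exactly what you wrote.
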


\begin{proof} By the hypothesis we have $\sup(M\utp_RN)=\sup\{i|\Tor_i ^R (M,N)\neq
0\}=0$. Thus we have $M\utp_RN\simeq M\otimes_RN$. Now the result
is clear from Theorem \ref{DF}.
\end{proof}

The next theorem is the \emph{dependency formula} \cite{JJ} for the
complete intersection flat dimension.

\begin{thm}\label{dep} Let $R$ be a ring and let
$X, Y\in{\mathcal D}_{b}(R)$ such that $\sup(X\utp_RY)<\infty$ and
$\CIfd_RX<\infty$. Then
$$
\sup(X\utp_{R}Y)=\sup\{\depth R_{\fp}-\depth_{R_\fp}
{X_\fp}-\depth_{R_\fp} {Y_\fp}|\fp\in\Supp X\cap\Supp Y\}.
$$
\end{thm}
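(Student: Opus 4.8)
The plan is to reduce to the case of a deformation of codimension one, exactly as in the proof of Lemma \ref{sur} and Theorem \ref{DF}, and then invoke the dependency formula already known in the finite-flat-dimension case. First I would note that since $\CIfd_RX<\infty$, there is a codimension $c$ quasi-deformation $R\rightarrow R'\leftarrow Q$ with $\fd_Q(R'\utp_RX)<\infty$. Because $R\to R'$ is faithfully flat, both sides of the claimed equality are insensitive to base change along $R\to R'$: for the left side, $\sup(X'\utp_{R'}Y')=\sup(X\utp_RY)$ since flat base change commutes with $\utp$ and preserves homology; for the right side, every prime $\fq\in\Supp X'\cap\Supp Y'$ lies over a prime $\fp=\fq\cap R\in\Supp X\cap\Supp Y$, and the formulas $\depth_{R'_{\fq}}X'_{\fq}=\depth_{R_\fp}X_\fp+\depth(R'_\fq/\fp R'_\fq)$ (and likewise for $Y$ and for $R$) from \cite[Corollary 2.6]{I} make the expression $\depth R'_\fq-\depth_{R'_\fq}X'_\fq-\depth_{R'_\fq}Y'_\fq$ equal to $\depth R_\fp-\depth_{R_\fp}X_\fp-\depth_{R_\fp}Y_\fp$; and since $R\to R'$ is faithfully flat the fibres are nonempty so the two suprema over primes agree. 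Hence we may assume $R=R'$, i.e.\ $R$ admits an actual deformation $Q$ of codimension $c$ with $\fd_Q(X)<\infty$.

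Next I would pass from $Q$ to $R$ by induction on $c$, reducing to the codimension-one case, just as in Lemma \ref{sur}. By Lemma \ref{sur}(a), $\sup(X\utp_QY)=\sup(X\utp_RY)+c$. On the right-hand side, the primes of $Q$ in $\Supp_Q X\cap\Supp_Q Y$ are exactly those containing the ideal $J$ defining $R=Q/J$, hence correspond bijectively to primes of $R$ in $\Supp_R X\cap\Supp_R Y$; and for such a prime $\fP$ of $Q$ with image $\fp$ in $R$, one has $\depth Q_{\fP}=\depth R_\fp+c$ (the regular sequence cutting out $R$ stays regular after localization at $\fP$ since $\fP\supseteq J$), while $\depth_{Q_\fP}X_\fP=\depth_{R_\fp}X_\fp$ and $\depth_{Q_\fP}Y_\fP=\depth_{R_\fp}Y_\fp$ because depth of an $R$-complex is the same computed over $Q$ or over $R$ (surjectivity, \cite[Proposition 5.2(1)]{I}). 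Therefore $\depth Q_\fP-\depth_{Q_\fP}X_\fP-\depth_{Q_\fP}Y_\fP = c + (\depth R_\fp-\depth_{R_\fp}X_\fp-\depth_{R_\fp}Y_\fp)$, so taking the supremum over the matched sets of primes gives that the right-hand side for $(Q,X,Y)$ equals $c$ plus the right-hand side for $(R,X,Y)$. Comparing with the relation $\sup(X\utp_QY)=\sup(X\utp_RY)+c$, the equality for $R$ follows from the equality for $Q$.

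Thus it suffices to prove the statement when $\fd_RX<\infty$ outright, i.e.\ with no deformation needed at all. In that situation I would appeal directly to the dependency formula of Jorgensen \cite{JJ}: when $\fd_RX<\infty$ and $\sup(X\utp_RY)<\infty$, one has $\sup(X\utp_RY)=\rfd_R(X\utp_RY)=\sup\{\depth R_\fp-\depth_{R_\fp}(X\utp_RY)_\fp\mid \fp\in\Spec R\}$, and combining this with the depth formula of Theorem \ref{DF} applied locally at each $\fp\in\Supp X\cap\Supp Y$ (where $\CIfd_{R_\fp}X_\fp\le\CIfd_RX<\infty$ by Proposition \ref{P}, indeed $\fd_{R_\fp}X_\fp<\infty$) gives $\depth_{R_\fp}(X\utp_RY)_\fp=\depth_{R_\fp}X_\fp+\depth_{R_\fp}Y_\fp-\depth R_\fp$, whence $\depth R_\fp-\depth_{R_\fp}(X\utp_RY)_\fp=\depth R_\fp-\depth_{R_\fp}X_\fp-\depth_{R_\fp}Y_\fp$; one checks that $\Supp(X\utp_RY)=\Supp X\cap\Supp Y$ so the supremum is over the correct index set. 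The main obstacle I anticipate is bookkeeping the base-change and deformation identities for depth over the relevant residue-field extensions and regular sequences carefully enough that the suprema genuinely match index-set by index-set rather than merely being bounded by one another; once those identities are in place, the reduction is formal and the finite-flat-dimension case is quotable.
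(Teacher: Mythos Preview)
Your reduction via the quasi-deformation $R\to R'\leftarrow Q$ and then via the deformation $Q\to R$ is correct but entirely unnecessary. The paper's proof is much shorter: for each $\fp\in\Supp X\cap\Supp Y$, Proposition~\ref{P} already gives $\CIfd_{R_\fp}X_\fp<\infty$, so Theorem~\ref{DF} applies directly over $R_\fp$ to yield
\[
\depth R_\fp-\depth_{R_\fp}X_\fp-\depth_{R_\fp}Y_\fp=-\depth_{R_\fp}(X_\fp\utp_{R_\fp}Y_\fp)\le\sup(X_\fp\utp_{R_\fp}Y_\fp)\le\sup(X\utp_RY),
\]
with equality throughout when $\fp\in\Ass\H_s(X\utp_RY)$, $s=\sup(X\utp_RY)$. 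That is the whole argument; no passage to finite flat dimension is needed.

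More seriously, your final step contains a genuine error: the asserted equality $\sup(X\utp_RY)=\rfd_R(X\utp_RY)$ is false in general. Take $X=R$ and $Y$ any module with $\rfd_RY>0$ (for instance $Y=k$ over a local ring of positive depth); then $X\utp_RY\simeq Y$ has $\sup=0$ but $\rfd_RY>0$. Jorgensen's paper \cite{JJ} treats finitely generated modules, not arbitrary bounded complexes, and in any case does not make this claim. What actually closes the argument is not the Chouinard formula for $X\utp_RY$ but the elementary inequality $-\depth_{R_\fp}Z_\fp\le\sup Z$ together with the fact that equality is attained at an associated prime of the top homology---exactly what the paper invokes via \cite[(2.7)]{FI}. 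Once you notice this, the entire quasi-deformation detour evaporates: Theorem~\ref{DF} is already available locally under the original hypothesis $\CIfd_RX<\infty$, so there is nothing left to reduce.
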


\begin{proof} Let $\fp\in\Supp X\cap\Supp Y$. Thus by Proposition \ref{P} we have
$\CIfd_{R_{\fp }} {X_{\fp}}<\infty$. Since
$\sup(X_{\fp}\utp_{R_{\fp}}Y_{\fp})<\infty$, we have the following
chain of (in)equalities, where the equality holds by Theorem
\ref{DF}, and the first inequality holds by \cite[(2.7)]{FI}.
\begin{align*}
\depth{R_\fp}-\depth{X_\fp}-\depth_{R_\fp} {Y_\fp}
       = & -\depth_{R_\fp}(X_\fp\utp_{R_\fp}Y_\fp) \\[1ex]
       \le & \sup(X_\fp\utp_{R_{\fp}}Y_\fp) \\[1ex]
       \le & \sup(X\utp_{R}Y) \\[1ex]
       < & \infty,
\end{align*}
and we have the equality if $\fp\in\Ass(\H_{s}(X\utp_{R}Y))$, where
$s=\sup(X\utp_{R}Y)$.
\end{proof}

The following corollary generalizes \cite[Theotem 3]{CI}.

\begin{cor} Let $R$ be a ring and let
$X, Y\in{\mathcal D}_{b}(R)$ such that $\CIfd_RX<\infty$. If
$s=\sup(X\utp_RY)<\infty$, then
$$
\depth R-\depth_RX-\depth_RY\leq s.
$$
with equality if and only if $\depth_R\H_s(X\utp_RY)=0$.
\end{cor}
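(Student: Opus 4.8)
The plan is to derive this corollary directly from Theorem \ref{dep}, just as Corollary \ref{Huneke} was derived from Theorem \ref{DF}. First I would observe that Theorem \ref{dep} already gives the exact value of $s=\sup(X\utp_RY)$ as a supremum over primes $\fp\in\Supp X\cap\Supp Y$ of the quantities $\depth R_\fp-\depth_{R_\fp}X_\fp-\depth_{R_\fp}Y_\fp$. In particular, taking $\fp=\fm$ (which lies in $\Supp X\cap\Supp Y$ whenever both complexes are homologically nonzero, and the inequality is trivial otherwise), the term corresponding to $\fm$ is exactly $\depth R-\depth_RX-\depth_RY$, and since $s$ is the supremum of all such terms we immediately get $\depth R-\depth_RX-\depth_RY\le s$.

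For the equality statement, I would again use Theorem \ref{dep}: the supremum $s$ is attained at any prime $\fp\in\Ass(\H_s(X\utp_RY))$, as noted at the end of the proof of Theorem \ref{dep}. So equality $\depth R-\depth_RX-\depth_RY=s$ holds precisely when $\fm$ itself is such a prime achieving the supremum. The forward direction is then: if equality holds, then $\fm$ realizes the supremum, and one needs to see that this forces $\depth_R\H_s(X\utp_RY)=0$; conversely, if $\depth_R\H_s(X\utp_RY)=0$ then $\fm\in\Ass(\H_s(X\utp_RY))$, and plugging $\fp=\fm$ into the equality case of Theorem \ref{dep} gives $\depth R-\depth_RX-\depth_RY=s$.

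The one point requiring a little care is the equivalence ``$\depth_R\H_s(X\utp_RY)=0 \iff \fm\in\Ass_R\H_s(X\utp_RY)$'' together with the claim that $\fm$ realizing the supremum forces $\depth$ of the top homology to vanish. The first equivalence is standard: for a finitely generated module $M$ (and $\H_s(X\utp_RY)$ is finitely generated since $X,Y\in\mathcal D_b(R)$ with finitely generated homology—here I am implicitly in the finitely generated setting, or more generally one invokes the relevant depth-sensitivity statement), $\depth_RM=0$ if and only if $\fm\in\Ass_RM$. For the remaining implication, I would argue as follows: by the last line of the proof of Theorem \ref{dep}, for every $\fp\in\Ass\H_s(X\utp_RY)$ we have $\depth R_\fp-\depth_{R_\fp}X_\fp-\depth_{R_\fp}Y_\fp=s$; if moreover equality holds at $\fm$, i.e.\ $\depth R-\depth_RX-\depth_RY=s=-\depth_R(X\utp_RY)$, then by \cite[(2.7)]{FI} applied at $\fm$ we get $s=-\depth_R(X\utp_RY)\le\sup(X\utp_RY)=s$ with the crucial point being that $-\depth_R(X\utp_RY)=\sup(X\utp_RY)$ forces $\depth_R\H_s(X\utp_RY)=0$ via the standard fact that $\depth_RZ=-\sup(Z)$ for a complex $Z$ bounded above exactly when $\fm\in\Ass\H_{\sup Z}(Z)$.

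The main obstacle I anticipate is purely bookkeeping: making the chain $-\depth_R(X\utp_RY)\le\sup(X\utp_RY)$ an equality is equivalent to the vanishing $\depth_R\H_s(X\utp_RY)=0$, and one must cite the right depth-inequality lemma (\cite[(2.7)]{FI}, already used in the proof of Theorem \ref{dep}) and its equality characterization rather than reprove it. Once that equivalence is in hand, the corollary is a two-line consequence of Theorem \ref{dep} specialized to $\fp=\fm$.
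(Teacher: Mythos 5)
Your argument is correct and follows exactly the route the paper intends (the corollary is stated without proof as an immediate consequence of Theorems \ref{DF} and \ref{dep}): specializing the dependency formula to $\fp=\fm$ gives the inequality, and the equality case reduces, via $\depth R-\depth_RX-\depth_RY=-\depth_R(X\utp_RY)$, to the standard fact that $-\depth_RZ=\sup Z$ precisely when $\Hom_R(k,\H_{\sup Z}(Z))\neq0$, i.e.\ $\depth_R\H_{\sup Z}(Z)=0$. Your hedge about finite generation is unnecessary, since $\depth_RM=0\iff\Hom_R(k,M)\neq0\iff\fm\in\Ass_RM$ holds for arbitrary modules; otherwise nothing is missing.
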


\begin{cor} Let $X, Y\in{\mathcal D}_{b}(R)$ such that $\CIfd_RX<\infty$. Then the
following are equivalent:
\begin{itemize}
\item[(a)] $\sup(X\utp_RY)<\infty$.

\item[(b)] $\sup(X\utp_RY)\le\CIfd_RX$.
\end{itemize}
\end{cor}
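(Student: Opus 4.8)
The implication (b) $\Rightarrow$ (a) is immediate, since $\CIfd_R X$ is finite by hypothesis, so the inequality in (b) forces $\sup(X\utp_R Y)$ to be finite. Thus the real content is (a) $\Rightarrow$ (b), and the plan is to read it off from the dependency formula (Theorem \ref{dep}) together with the Chouinard-type description of restricted flat dimension in Proposition \ref{CH}.

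Assume $s:=\sup(X\utp_R Y)<\infty$ (the case $X\utp_R Y\simeq 0$ being trivial). Then the hypotheses of Theorem \ref{dep} are satisfied, and it yields
\[
s=\sup\{\depth R_\fp-\depth_{R_\fp}X_\fp-\depth_{R_\fp}Y_\fp \mid \fp\in\Supp X\cap\Supp Y\},
\]
where, as in the proof of that theorem, the supremum is attained at any $\fp\in\Ass\H_s(X\utp_R Y)$ (such $\fp$ exist and automatically lie in $\Supp X\cap\Supp Y$). Fix one such $\fp$. For the $X$-term, Chouinard's formula \cite[Theorem 2.4(b)]{CFF} gives $\depth R_\fp-\depth_{R_\fp}X_\fp\le\rfd_R X$, and Proposition \ref{CH} gives $\rfd_R X\le\CIfd_R X$. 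Hence
\[
s=\bigl(\depth R_\fp-\depth_{R_\fp}X_\fp\bigr)-\depth_{R_\fp}Y_\fp\le\CIfd_R X-\depth_{R_\fp}Y_\fp,
\]
so that $s\le\CIfd_R X$ as soon as $\depth_{R_\fp}Y_\fp\ge 0$.

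The step I expect to cost some effort is exactly this inequality $\depth_{R_\fp}Y_\fp\ge 0$. It is automatic when $Y$ (or just $Y_\fp$) is a module, which already covers the classical situation; for a general complex $Y\in{\mathcal D}_b(R)$ one would exploit the additional structure at $\fp\in\Ass\H_s(X\utp_R Y)$ --- namely $s=\sup(X_\fp\utp_{R_\fp}Y_\fp)$, and, by Theorem \ref{DF} applied over $R_\fp$ (legitimate since $\CIfd_{R_\fp}X_\fp<\infty$ by Proposition \ref{P}), $\depth_{R_\fp}(X_\fp\utp_{R_\fp}Y_\fp)=-s$ --- combined with the bound $-\depth\le\sup$ of \cite[(2.7)]{FI} and the localization behaviour of $\rfd$ in Propositions \ref{CH} and \ref{P}, in order to control $\depth_{R_\fp}Y_\fp$. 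With that inequality secured, the corollary follows at once from the displayed chain.
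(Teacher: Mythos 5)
You have reproduced the paper's own strategy --- invoke Theorem \ref{dep}, locate a prime $\fp\in\Ass\H_s(X\utp_RY)$ where the supremum is attained, and bound $\depth R_\fp-\depth_{R_\fp}X_\fp$ by $\rfd_RX=\CIfd_RX$ via Chouinard's formula and Proposition \ref{CH} --- and you have correctly isolated the one step on which everything hinges, namely $\depth_{R_\fp}Y_\fp\ge 0$. The problem is that this step is not ``some effort''; it is unobtainable, and your sketch for supplying it is circular. Theorem \ref{DF} over $R_\fp$ gives $\depth_{R_\fp}(X_\fp\utp_{R_\fp}Y_\fp)=\depth_{R_\fp}X_\fp+\depth_{R_\fp}Y_\fp-\depth R_\fp$, and combining this with the bound $-\depth\le\sup$ of \cite[(2.7)]{FI} merely reproduces the inequality $\depth R_\fp-\depth_{R_\fp}X_\fp-\depth_{R_\fp}Y_\fp\le s$ that you already had from Theorem \ref{dep}; it yields no lower bound on $\depth_{R_\fp}Y_\fp$ by itself. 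Indeed no such bound exists for a general $Y\in\mathcal{D}_b(R)$: since $\depth_R\Sigma^tN=\depth_RN-t$, taking $X=R$ (so $\CIfd_RX=0$) and $Y=\Sigma^tN$ for a nonzero module $N$ and $t>0$ gives $\sup(X\utp_RY)=t>0=\CIfd_RX$, so the implication (a)$\Rightarrow$(b) actually fails for unnormalized complexes.

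For what it is worth, the paper's printed proof contains exactly the same silent jump: the chain $\rfd_RX\ge\depth R_\fp-\depth_{R_\fp}X_\fp\ge s$ uses $\depth_{R_\fp}Y_\fp\ge0$ without comment. Your argument (and the corollary) is correct whenever $Y$ is a module, or more generally whenever $\sup Y\le0$, because then $\depth_{R_\fp}Y_\fp\ge-\sup Y_\fp\ge0$ by the same \cite[(2.7)]{FI}; in full generality what your chain proves is the shifted inequality $\sup(X\utp_RY)\le\CIfd_RX+\sup Y$. So: same approach as the paper, the missing step correctly identified, but the proposed way of closing it does not work, and for arbitrary $Y$ nothing can.
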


\begin{proof} If for all integer $n$, $\H_n(X\utp_RY)=0$, then the assertion
holds. So assume that $s=\sup(X\utp_RY)<\infty$. Thus by Theorem
\ref{dep}, $s=\depth
R_{\fp}-\depth_{R_{\fp}}X_{\fp}-\depth_{R_{\fp}}Y_{\fp}$ for some
$\fp\in\Supp(X)\cap\Supp(Y)$. Choose an integer
$n>\CIfd_RX=\rfd_RX\geq\depth R_{\fp}-\depth_{R_{\fp}}X_{\fp}\geq
s$. Then $\H_n(X\utp_RY)=0$.
\end{proof}

Now we turn our attention to measure $\inf\uhom_R(X,Y)$.

\begin{thm}\label{3.13} Let $(R,\fm,k)$ be a complete local ring and let
$X\in\mathcal{D}_b(R)$. If $Y\in\mathcal{D}_b^f(R)$ such that
$-\inf\uhom_R(X,Y)<\infty$ and $\CIfd_RX<\infty$, then
$$-\inf\uhom_R(X,Y)=\depth R-\depth_R X-\inf Y.$$
\end{thm}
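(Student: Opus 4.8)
The plan is to reduce the statement about $\inf\uhom_R(X,Y)$ to the already-established depth formula (Theorem~\ref{DF}) via a duality, exploiting the completeness of $R$. First I would invoke the Matlis duality functor $(-)^{\vee}=\uhom_R(-,\E(k))$, which on the derived category of $R$ is faithful and exact, and recall the standard adjunction isomorphism
\begin{displaymath}
\uhom_R(X,Y)^{\vee}\simeq X\utp_R Y^{\vee}
\end{displaymath}
valid here because $Y\in\mathcal{D}^f_b(R)$ (so $Y$ is homologically finite and the tensor-evaluation/adjointness morphisms are isomorphisms). Applying $\H(-)$ and using that Matlis duality reverses suprema and infima, one gets $-\inf\uhom_R(X,Y)=\sup(X\utp_R Y^{\vee})$. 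The hypothesis $-\inf\uhom_R(X,Y)<\infty$ translates into $\sup(X\utp_R Y^{\vee})<\infty$, exactly the finiteness condition needed to feed $X$ and $Y^{\vee}$ into Theorem~\ref{DF}.

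Next I would apply Theorem~\ref{DF} to the pair $X$, $Y^{\vee}$ (note $\CIfd_R X<\infty$ by assumption, and $Y^{\vee}\in\mathcal{D}_b(R)$ since $Y$ is homologically finite and $R$ is complete, so $\E(k)$ is finitely generated... more precisely $Y^{\vee}$ has Artinian, hence bounded, homology). This yields
\begin{displaymath}
\depth_R(X\utp_R Y^{\vee})=\depth_R X+\depth_R Y^{\vee}-\depth R.
\end{displaymath}
Now I need two dictionary entries. The first is the Chouinard-type identity $\sup(X\utp_R Y^{\vee})=-\depth_R(X\utp_R Y^{\vee})$; this is not true for arbitrary complexes, but it does hold when the relevant homology is Artinian (equivalently supported only at $\fm$), which is the case here since $Y^{\vee}$ has Artinian homology forces $X\utp_R Y^{\vee}$ to have Artinian homology, and for such complexes $\depth=-\sup$. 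The second is the computation $\depth_R Y^{\vee}=\width_R Y$ together with, for $Y$ homologically finite over a complete (or just Noetherian local) ring, $\width_R Y=\inf Y$. Chaining these: $-\inf\uhom_R(X,Y)=\sup(X\utp_R Y^{\vee})=-\depth_R(X\utp_R Y^{\vee})=\depth R-\depth_R X-\depth_R Y^{\vee}=\depth R-\depth_R X-\inf Y$, which is the claim.

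The main obstacle I anticipate is justifying the passage between $\depth$ and $-\sup$ for the complex $X\utp_R Y^{\vee}$, and more fundamentally making sure the Matlis-duality adjunction $\uhom_R(X,Y)^{\vee}\simeq X\utp_R Y^{\vee}$ is applied in the correct generality (it is the finiteness of $\H(Y)$, not of $\H(X)$, that one must lean on, via tensor-evaluation). A clean way to organize this is to first prove, as a small lemma, that when $Y\in\mathcal{D}^f_b(R)$ the complex $Y^{\vee}$ lies in $\mathcal{D}_b(R)$ with $\sup Y^{\vee}=-\inf Y$ and $\inf Y^{\vee}=-\sup Y$, and that $\depth_R Y^{\vee}=\inf Y$; then verify $X\utp_R Y^{\vee}$ has all homology $\fm$-torsion so that $-\depth$ and $\sup$ agree on it; the rest is a direct substitution into Theorem~\ref{DF}. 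One should double-check that the finiteness hypothesis $-\inf\uhom_R(X,Y)<\infty$ is genuinely needed precisely to guarantee $\sup(X\utp_R Y^{\vee})<\infty$, the sole extra hypothesis of Theorem~\ref{DF} beyond $\CIfd_R X<\infty$, and that completeness of $R$ is used only to have Matlis duality behave as a perfect duality on homologically finite and homologically Artinian complexes.
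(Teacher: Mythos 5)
Your proposal is in substance the paper's own proof: both reduce $-\inf\uhom_R(X,Y)$ to $\sup(X\utp_R\uhom_R(Y,\E(k)))$ via Matlis duality and then evaluate that supremum by the depth formula at $\fm$; the paper simply cites its dependency formula (Theorem \ref{dep}), which, because $\Supp(X\utp_R\uhom_R(Y,\E(k)))\subseteq\{\fm\}$, is exactly your combination of Theorem \ref{DF} with the observation that $\depth=-\sup$ for complexes whose homology is $\fm$-torsion. One technical point should be corrected: the isomorphism you write, $\uhom_R(X,Y)^{\vee}\simeq X\utp_R Y^{\vee}$, is the Hom-evaluation morphism, which requires $X$ (not $Y$) to be homologically degreewise finite and is not available here; moreover $X\utp_R Y^{\vee}$ need not be Matlis reflexive, so one cannot recover it from its double dual. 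What you actually need, and what the paper uses, is the always-valid Hom-tensor adjunction combined with Matlis biduality of $Y$ (valid since $Y\in\mathcal{D}_b^f(R)$ and $R$ is complete): $\uhom_R(X,Y)\simeq\uhom_R(X,(Y^{\vee})^{\vee})\simeq(X\utp_R Y^{\vee})^{\vee}$, which, since $\E(k)$ is faithfully injective, yields the same numerical identity $-\inf\uhom_R(X,Y)=\sup(X\utp_R Y^{\vee})$. The remaining steps --- boundedness of $Y^{\vee}$, the transfer of the finiteness hypothesis, $\fm\in\Ass\H_s(X\utp_R Y^{\vee})$ forcing $\depth=-\sup$, and $\depth_R Y^{\vee}=\width_R Y=\inf Y$ --- are all correct and match the paper.
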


\begin{proof}  Let $E:=\E(k)$ be the injective hull of $k$. Using completeness of $R$
and \cite[Page 180]{CH}, we have
$$
\uhom_R(\uhom_R(Y,E),E)=Y\utp_R\uhom(E,E)=Y.
$$
Therefore we obtain that
\begin{align*}
-\inf(\uhom_R(X,Y))= & -\inf(\uhom_R(X,\uhom_R(\uhom_R(Y,E),E))) \\[1ex]
       = & -\inf(\uhom_R(X\utp_R\uhom_R(Y,E),E)) \\[1ex]
       = & \sup(X\utp_R\uhom_R(Y,E)).
\end{align*}
Since $\Supp X\cap\Supp(\uhom_R(Y,E))=\{\fm\}$, using Theorem
\ref{dep}(a) we obtain
\begin{align*}
-\inf(\uhom_R(X,Y))= & \sup(X\utp_R\uhom_R(Y,E)) \\[1ex]
       = & \depth R-\depth_RX-\depth_R\uhom(Y,E) \\[1ex]
       = & \depth R-\depth_RX-\inf Y,
\end{align*}
where the last equality holds by \cite{Y1}.
\end{proof}

\begin{cor} Let $(R,\fm)$ be a complete local ring and $M$ be an
$R$-module such that $\CIfd_RM<\infty$. If $N$ is a finite
$R$-module, then the following are equivalent:
\begin{itemize}
\item[(a)] $\Ext_R^n(M,N)=0$ $n\gg0$.

\item[(b)] $\Ext_R^n(M,N)=0$ $n>\depth R-\depth_RM$.
\end{itemize}
\end{cor}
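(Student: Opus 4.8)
The plan is to deduce this corollary directly from Theorem \ref{3.13} by specializing to the case where $X = M$ and $Y = N$ are modules, and by recognizing the two conditions (a) and (b) as, respectively, the finiteness and the exact value of the quantity $-\inf\uhom_R(M,N)$. First I would record the standard identification $\Ext_R^n(M,N) = \H_{-n}(\uhom_R(M,N))$, valid since $M$ and $N$ are modules; consequently $\uhom_R(M,N) \in \mathcal{D}_-(R)$ with $\H_{-n}(\uhom_R(M,N)) = \Ext_R^n(M,N)$, so that
$$-\inf\uhom_R(M,N) = \sup\{n \geq 0 \mid \Ext_R^n(M,N) \neq 0\},$$
with the convention that the supremum is $-\infty$ if all $\Ext$ modules vanish (a case that can be handled trivially, or excluded since $M,N$ nonzero forces $\Hom_R(M,N)$ or some $\Ext$ to be nonzero). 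Note also that $\inf N = 0$ because $N$ is a module.

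Next I would argue the equivalence. For (a) $\Rightarrow$ (b): if $\Ext_R^n(M,N) = 0$ for all $n \gg 0$, then $-\inf\uhom_R(M,N) < \infty$, so Theorem \ref{3.13} applies (using $\CIfd_RM < \infty$, $R$ complete, and $N \in \mathcal{D}_b^f(R)$) and gives $-\inf\uhom_R(M,N) = \depth R - \depth_R M - \inf N = \depth R - \depth_R M$. Since $-\inf\uhom_R(M,N)$ is exactly the largest $n$ with $\Ext_R^n(M,N) \neq 0$, this says $\Ext_R^n(M,N) = 0$ for all $n > \depth R - \depth_R M$, which is (b). The direction (b) $\Rightarrow$ (a) is immediate, since (b) in particular forces $\Ext_R^n(M,N) = 0$ for $n \gg 0$.

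I do not expect a serious obstacle here; this is a routine corollary. The one point needing a little care is the degenerate case where $\Ext_R^n(M,N) = 0$ for all $n \geq 0$ (equivalently $\uhom_R(M,N) \simeq 0$, i.e. $-\inf\uhom_R(M,N) = -\infty$): then (a) holds vacuously and (b) holds vacuously as well, so the equivalence is still valid, but strictly speaking Theorem \ref{3.13} as stated presupposes $-\inf\uhom_R(X,Y)$ is an honest finite number or at least that $\uhom_R(X,Y) \not\simeq 0$. I would simply remark that this case is trivial and otherwise invoke Theorem \ref{3.13}. So the proof I would write is essentially: translate $\Ext$-vanishing into a statement about $\inf\uhom_R(M,N)$, dispose of the trivial case, apply Theorem \ref{3.13} together with $\inf N = 0$, and read off the equivalence.
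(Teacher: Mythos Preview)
Your proposal is correct and follows essentially the same approach as the paper: the paper's proof simply notes the identity $-\inf\uhom_R(M,N)=\sup\{i\mid\Ext^i_R(M,N)\neq0\}$ and invokes Theorem \ref{3.13}. Your version is a bit more detailed (explicitly using $\inf N=0$ and disposing of the degenerate case), but the core argument is identical.
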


\begin{proof} It is easily follows from Theorem \ref{3.13} and noting that we have the equality
$-\inf\uhom_R(M,N)=\sup\{i|\Ext^i_R(M,N)\neq0\}.$
\end{proof}

\begin{rem}\label{A} {\rm (1) Let $R$ be a ring and let
$X, Y\in{\mathcal D}_{b}(R)$ such that $\sup(X\utp_RY)<\infty$,
$\gfd_RX<\infty$ and $\id_RY<\infty$. Then
$$
\sup(X\utp_{R}Y)=\sup\{\depth R_{\fp}-\depth_{R_\fp}
{X_\fp}-\depth_{R_\fp} {Y_\fp}|\fp\in\Supp X\cap\Supp Y\}.
$$
To see this let $\fp\in\Supp(X)\cap\Supp(Y)$. Then using the
hypothesis, \cite[Lemma 5.1.3]{C} and \cite[Proposition 5.3.I]{AF}
respectively we have $\gfd_{R_{\fp}}X_{\fp}$ and
$\id_{R_{\fp}}Y_{\fp}$ are finite numbers. Thus from \cite [Theorem
6.3(i)]{CH} we have
$$
\depth{R_\fp}-\depth{X_\fp}-\depth_{R_\fp}
{Y_\fp}=-\depth_{R_\fp}(X_\fp\utp_{R_\fp}Y_\fp).
$$
The rest of the argument is the same as proof of Theorem \ref{dep}.

(2) Immediately from (1) we have if $X$ is a bounded $R$-complex
such that $\gfd_RX<\infty$, then
$$\sup(X\utp_{R}E)=\depth R-\depth_RX,$$ where $E:=\E(k)$ is the injective hull of
$k$ (cf. \cite[Theorem (8.7)]{IS})

(3) Let $(R,\fm,k)$ be a complete local ring and let
$X\in\mathcal{D}_b(R)$. If $Y\in\mathcal{D}_b^f(R)$ such that
$-\inf\uhom_R(X,Y)<\infty$, $\gfd_RX<\infty$ and $\fd_RY<\infty$,
then
$$-\inf\uhom_R(X,Y)=\depth R-\depth_R X-\inf Y.$$
Indeed in proof of Theorem \ref{3.13} we have seen that
$-\inf(\uhom_R(X,Y))=\sup(X\utp_R\uhom_R(Y,E))$. Since
$\fd_RY<\infty$, we have $\id_R\uhom_R(Y,E)<\infty$. Now using (1)
the proof is complete.}
\end{rem}

The following example shows that the completeness assumption of $R$
is crucial in Theorem \ref{3.13} and Remark \ref{A}(3).

\begin{exam}
Let $(R,\fm)$ be a local domain which is not complete with respect
to the $\fm$-adic topology. In \cite [(3.3)]{AER} it is shown that
$\Hom_R(\widehat{R},R)=0$. Therefore, when $X=\widehat{R}$ and
$Y=R$.  It is clear that $\depth R-\depth_RX=0$ but
$-\inf\uhom_R(X,Y)=\sup\{i|\Ext^i_R(X,Y)\neq0\}\neq 0$.
\end{exam}

\section*{Acknowledgments}

The authors are grateful to Yuji Yoshino for thoughtful discussions
about this research. They also would like to thank the referee for
his/her useful comments.

\end{document}